\theoremstyle{definition}
\newtheorem{definition}{Definition}[section]
\theoremstyle{plain}
\newtheorem{lemma}[definition]{Lemma}
\newtheorem{theorem}[definition]{Theorem}
\newtheorem{proposition}[definition]{Proposition}
\newtheorem{corollary}[definition]{Corollary}
\theoremstyle{remark}
\newtheorem{remark}[definition]{Remark}
\begin{document}

\title[Expansion of semi-bounded o-minimal structure]{Expansion of a semi-bounded o-minimal structure by a geometric progression}
\author[M. Fujita]{Masato Fujita}
\address{Department of Liberal Arts,
Japan Coast Guard Academy,
5-1 Wakaba-cho, Kure, Hiroshima 737-8512, Japan}
\email{fujita.masato.p34@kyoto-u.jp}

\begin{abstract}
We demonstrate that an expansion of a semi-bounded o-minimal expansion of the ordered group of reals by an increasing geometric progression is locally o-minimal.
\end{abstract}

\subjclass[2020]{Primary 03C64}

\keywords{semi-bounded o-minimal structure, almost o-minimal structure, locally o-minimal structure}

\maketitle

\section{Introduction}\label{sec:intro}
\sout{The author proposed new locally o-minimal structures whose universe is the set of reals. }
{\bf{The author realized that Theorem \ref{thm:example} follows from \cite[Theorem 1]{MT} from the personal communication with C. Miller.}}

Locally o-minimal structures are extensively studied in \cite{TV, Fornasiero, KTTT,Fuji1,Fuji2,Fuji3,Fuji4} and these study demonstrates that sets definable in definably complete locally o-minimal structures enjoy tame topological properties.
Note that locally o-minimal expansions of the set of reals are almost o-minimal.
An expansion of dense linear order without endpoints $\mathcal R=(R,<,\ldots)$ are called \textit{almost o-minimal} \cite{Fuji} if every bounded definable subset of $R$ is the union of finitely many points and finitely many open intervals.
The author introduced the notion of almost o-minimal structures because their topological properties are more tame than other locally o-minimal structures. 
For instance, an almost o-minimal expansion of an ordered group admits uniform local definable cell decomposition \cite[Theorem 1.7]{Fuji}, but a locally o-minimal structure does not necessarily admit local definable cell decomposition by \cite[Corollary 4.1]{Fuji1}.

We introduce known examples of locally o-minimal expansion of the ordered group of reals.
We define a quasi-periodic locally o-minimal expansion of the ordered group of reals later.
In such a structure, there exist a positive $r \in \mathbb R$ and an o-minimal structure $\mathcal R'$ such that any definable set is a finite union of sets of the form $$\bigcup_{a \in Z}a+X,$$
where $Z$ is a definable subset of $(r\mathbb Z)^n$ and $X$ is a subset of $[0,r)^n$ definable in $\mathcal R'$.
Many examples of locally o-minimal structures fall into this category including the structure $(\mathbb R,<,+,\sin x)$ given as an example of locally o-minimal structures in \cite{TV}.

There are locally o-minimal structures not fallen into this category.
Proposition 26 and Remark 27 of \cite{KTTT} demonstrate that $\mathcal R=(\mathbb R,+,<,\{e^n\;|\;n < \omega\})$ is locally o-minimal and $(\mathcal R,\mathbb Z)$ is not, where $e$ is the base of logarithm.
Another example was provided by Friedman and Miller.
Consider a strictly increasing sequence $\{\phi_k\;|\;k \in \mathbb N\}$ of positive real numbers such that $\lim_{k \to \infty}\phi_k/\phi_{k+1}=0$.
The expansion of  a semi-bounded o-minimal structure  expanded by the sequence $\{\phi_k\}$ is locally o-minimal \cite[Theorem 3]{FM}.

In this paper, we give one another example.
Let $\mathcal R=(\mathbb R,<,+,0,\ldots)$ be a semi-bounded expansion of the ordered group of reals.
We show that the expansion $\mathcal R_E$ of $\mathcal R$ by the geometric progression $E=\{\rho^n\;|\; n \geq 0\}$ with $\rho>1$ is a non-quasi-periodic locally o-minimal structure and any bounded set definable in $\mathcal R_E$ is definable in $\mathcal R$.

This paper is organized as follows:
We demonstrate that $\mathcal R_E$ is locally o-minimal and any bounded set definable in $\mathcal R_E$ is definable in $\mathcal R$ in Section \ref{sec:almost_ominimal}.
We define quasi-periodic o-minimal structure and show that $\mathcal R_E$ is not quasi-periodic in Section \ref{sec:periodic}.
We introduced an example \cite[Theorem 3]{FM} given by Friedman and Miller.
However, its original proof has a gap.
We fix it in Appendix \ref{sec:appendix}.

\section{Local o-minimality}\label{sec:almost_ominimal}
We demonstrate that the expansion $\mathcal R_E$ is locally o-minimal in this section.
We first recall the definition of semi-bounded o-minimal structures.
\begin{definition}[\cite{Edmundo}]
	An o-minimal expansion $\mathcal R=(R,<,+,0,\ldots)$ of an ordered group is \textit{semi-bounded} if every definable set is already definable in the reduct $(R,0,1,+,<,(B_i)_{i \in I}, (\lambda)_{\lambda \in \Lambda})$, where $(B_i)_{i \in I}$ is the collection of all bounded definable sets and $\Lambda$ is the collection of all definable endomorphisms on $R$.
\end{definition}

%%%%%%%%%%%%%

\begin{lemma}\label{lem:example_basic_pre}
	Let $\rho$ be a real number with $\rho>1$.
	Set $E=\{\rho^n\;|\; n \in \mathbb Z,\ n \geq 0\}$.
	For a positive integer $m$, a sequence of real numbers $(a_1, \ldots, a_m)$ of length $m$ and a positive real number $R$, we put
	$$\mathfrak I((a_1,\ldots, a_m),R)= v(E^m) \cap (-R,R),$$
	where $v$ is the linear map given by $v(x_1,\ldots, x_m) = \sum_{i=1}^m a_ix_i$.
	Then the set $\mathfrak I((a_1,\ldots, a_m),R)$ is a finite set and there exists $r>0$ such that $v(E^m) \cap (-r,r) \subseteq \{0\}$.
\end{lemma}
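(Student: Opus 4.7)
The strategy is induction on $m$, with the second assertion (existence of $r$) falling out as an immediate corollary of the first (finiteness of $\mathfrak{I}$). The base case $m=1$ is immediate: if $a_1 = 0$ then $v(E) = \{0\}$, and otherwise $|a_1 \rho^n| < R$ forces $n$ to lie in a finite set, so $\mathfrak{I}$ is finite.

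For the inductive step, I would assume for contradiction that $\mathfrak{I}((a_1,\ldots,a_m),R)$ is infinite, fix an infinite sequence of distinct values $y_k = \sum_{i=1}^m a_i \rho^{n_i^{(k)}}$ in $(-R,R)$, and pass to a subsequence along which one and the same permutation (chosen by pigeonhole, since there are only $m!$ orderings) sorts the exponents, so that after renaming the coefficients to $b_1,\ldots,b_m$ we may assume $n_1^{(k)} \leq \cdots \leq n_m^{(k)}$. Set $N_k := n_m^{(k)}$. If $N_k$ were bounded then only finitely many tuples would occur, forcing finitely many $y_k$; so along a further subsequence $N_k \to \infty$, and
$$c_k := y_k / \rho^{N_k} = \sum_{i=1}^m b_i \rho^{n_i^{(k)} - N_k} \longrightarrow 0$$
because $|y_k| < R$ while $\rho^{N_k} \to \infty$. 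Each offset $n_i^{(k)} - N_k$ is a non-positive integer, so one more pigeonhole splits $\{1,\ldots,m\}$ into an index set $I_1$, on which the offset eventually stabilizes at some $d_i \leq 0$ (and $m \in I_1$ with $d_m = 0$), and a set $I_2$, on which the offset tends to $-\infty$. Taking $k \to \infty$ in the expression for $c_k$ yields the exact relation $\sum_{i \in I_1} b_i \rho^{d_i} = 0$.

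The payoff of that relation is that for $k$ large the $I_1$-part of $y_k$ cancels on the nose, leaving $y_k = \sum_{i \in I_2} b_i \rho^{n_i^{(k)}}$. Because $m \in I_1$, we have $|I_2| < m$, so the inductive hypothesis applied to the coefficient tuple $(b_i)_{i \in I_2}$ tells us that the set of values of this shorter sum lying in $(-R,R)$ is finite, contradicting the infinitude of distinct $y_k$ and completing the induction. The second assertion then follows at once: the finite set $\mathfrak{I}((a_1,\ldots,a_m),1)$ either equals $\{0\}$ (take $r := 1$) or has a positive minimum $r$ among the absolute values of its non-zero members, and for any such $r$ one has $v(E^m) \cap (-r,r) \subseteq \{0\}$.

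I expect the main technical irritation to be the double pigeonhole bookkeeping, first to freeze the sorting permutation and then to stabilize each of the $m$ offsets $n_i^{(k)} - N_k$, with the attendant need to pass to nested subsequences while keeping the $y_k$ distinct. The conceptual heart, however, is the one-line observation that once the maximum exponent escapes to infinity, the terms whose offsets stabilize must satisfy a nontrivial linear relation that annihilates them exactly, which is precisely the dimension-reducing step that feeds the induction.
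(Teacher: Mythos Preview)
Your argument is correct, but it takes a genuinely different route from the paper's. The paper also inducts on $m$, but instead of arguing by contradiction it decomposes $\mathfrak{I}$ directly: for each $i$ it looks at the subset $\mathfrak{J}_i$ of values where $x_i$ is the \emph{minimum} coordinate, then stratifies $\mathfrak{J}_1$ by the level $x_1=\rho^k$. The base level $k=0$ reduces to the $(m-1)$-coefficient case via the trivial inequality $|\sum_{j\ge 2}a_jx_j|\le |a_1|+R$, and the scaling identity $\mathfrak{K}(k)=\rho^k\mathfrak{K}(0)$ then shows that only finitely many levels $k$ can contribute nonzero elements to $(-R,R)$. So the paper pivots on the smallest exponent and peels off one coordinate at a time, whereas you pivot on the largest exponent, extract a limiting cancellation relation among the terms whose offsets stabilize, and peel off a whole block $I_1$ at once. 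Your approach is more analytic (subsequences, limits) and potentially removes several coordinates in one step; the paper's is more constructive and avoids the contradiction setup and nested-subsequence bookkeeping you correctly flag as the main irritation. One cosmetic point: in your derivation of the second assertion, $\mathfrak{I}((a_1,\ldots,a_m),1)$ could be empty rather than $\{0\}$, but then $r=1$ still works for the same reason.
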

\begin{proof}
	We prove the lemma by induction on $m$.
	The lemma is obvious when $m=1$.
	
	We next consider the case in which $m>1$.
	The first task is to show that $\mathfrak I((a_1,\ldots, a_m),R)$ is a finite set.
	We set
	\begin{align*}
	  & \mathfrak J_i((a_1,\ldots, a_m),R) = \left\{\left.\sum_{j=1}^n a_jx_j\;\right|\; x_j \in E,\ x_j \geq x_i \ \ (1 \leq j \leq m)\right\} \cap (-R,R)%\\
	\end{align*}
	for each $1 \leq i \leq m$.
We have only to demonstrate that $\mathfrak J_i((a_1,\ldots, a_m),R)$ is a finite set for each $1 \leq i \leq m$ because $\mathfrak I((a_1,\ldots, a_m),R) = \bigcup_{i=1}^m \mathfrak J_i((a_1,\ldots, a_m),R)$.
We only consider the case in which $i=1$.
The proof is similar in the other cases.

We consider the sets 
	\begin{align*}
	  & \mathfrak K(k,(a_1,\ldots, a_m)) = \left\{\left.\sum_{j=1}^n a_jx_j\;\right|\; x_j \in E,\ x_j \geq x_1 =\rho^k\ \ (1 \leq j \leq m)\right\} \text{ and }\\
	  & \mathfrak L(k,(a_1,\ldots, a_m),R) = \mathfrak K(k,(a_1,\ldots, a_m))  \cap (-R,R)
\end{align*}
for $k \geq 0$.
Since $|\sum_{i=2}^m a_jx_j| \leq |a_1|+|a_1+\sum_{j=2}^m a_jx_j| $, we have $$\mathfrak L(0,(a_1,\ldots, a_m),R) \subseteq  a_1+\mathfrak I((a_2,\ldots, a_m),R+|a_1|).$$
The set $\mathfrak I((a_2,\ldots, a_m),R+|a_1|)$ is a finite set by the induction hypothesis.
It implies that the set $\mathfrak L(0,(a_1,\ldots, a_m),R)$ is also finite.
The equality 
\begin{equation*}
\mathfrak K(k,(a_1,\ldots, a_m)) =\rho^k \mathfrak K(0,(a_1,\ldots, a_m)) 
\end{equation*}
is trivial.
Therefore, we have 
 \begin{equation}
 	\mathfrak L(k,(a_1,\ldots, a_m),R) =\rho^k \mathfrak L(0,(a_1,\ldots, a_m),R) \cap (-R,R). \label{eq:fffff3333}
 \end{equation}
This equality implies that the set $\mathfrak L(k,(a_1,\ldots, a_m),R)$ is a finite set for each $k > 0$ because $\mathfrak L(0,(a_1,\ldots, a_m),R)$ is a finite set.

We want to show that there exists $N>0$ such that, for each $k>N$, the set $\mathfrak L(k,(a_1,\ldots, a_m))$ is the singleton $\{0\}$ when $0 \in \mathfrak L(0,(a_1,\ldots, a_m),R)$ and it is an empty set otherwise.
The claim is trivial when $\mathfrak L(0,(a_1,\ldots, a_m),R)$ is an empty set or coincides with $\{0\}$.
In the other case, we have $d= \inf\{|c|\;|\;  0 \neq c \in \mathfrak L(0,(a_1,\ldots, a_m),R)\}>0$ because the set $\mathfrak L(0,(a_1,\ldots, a_m),R)$ is a finite set.
Take $N>0$ so that $\rho^Nd>R$.
For any $k>N$, the set  $\mathfrak L(k,(a_1,\ldots, a_m),R)$ is either the singleton $\{0\}$ or empty by the equality (\ref{eq:fffff3333}).
Therefore, we get $$\mathfrak J_1((a_1,\ldots, a_m),R)=\bigcup_{k \geq 0} \mathfrak L(k,(a_1,\ldots, a_m),R) = \bigcup_{k=0}^N \mathfrak L(k,(a_1,\ldots, a_m),R).$$
Since $\mathfrak L(k,(a_1,\ldots, a_m),R)$ is a finite set for each $k  \geq 0$, the set $\mathfrak J_1((a_1,\ldots, a_m),R)$ is also a finite set. 
We have demonstrated that $\mathfrak I((a_1,\ldots, a_m),R)$ is a finite set. 

The final task is to find $r>0$ such that  $v(E^m) \cap (-r,r) \subseteq \{0\}$.
Take a sufficiently large $R>0$ so that $\mathfrak I((a_1,\ldots, a_m),R)=v(E^m) \cap (-R,R)$ is not empty.
Since $\mathfrak I((a_1,\ldots, a_m),R)$ is a finite set, we can take $r>0$ so that $v(E^m) \cap (-r,r)  = \mathfrak I((a_1,\ldots, a_m),R) \cap (-r,r) \subseteq \{0\}$.
\end{proof}

\begin{lemma}\label{lem:example_basic_pre2}
	Let $\mathcal L$ be a language and $P$ be a unary predicate.
	Put $\mathcal L_P=\mathcal L \cup \{P\}$.
	Let $\mathcal M=(M,\ldots)$ be an $\mathcal L$-structure and $\mathcal M_P=(\mathcal M,P)$ be an expansion of $\mathcal M$ by $P$.
	We denote the set $\{x \in M\;|\; \mathcal M_P \models P(x)\}$ by the same symbol $P$.
	Let $S \subseteq M^n$ be the set definable in $\mathcal M_P$.
	There exist $l, m \geq 0$ and an $\mathcal L$-formula $\psi(\overline{x},\overline{y}, \overline{z})$ such that the sequences of variables $\overline{y}$ and $overline{z}$ is of length $l$ and $m$, respectively, and $$S= \Pi(X \cap (M^n \times P^l \times (M \setminus P)^m)),$$
	where $\Pi:M^n \times M^l \times M^m \to M^n$ is the coordinate projection onto the first $n$ coordinates and $X=\{(\overline{x},\overline{y}, \overline{z}) \in M^n \times M^l \times M^m\;|\; \mathcal M \models \psi(\overline{x},\overline{y}, \overline{z})\}$.
\end{lemma}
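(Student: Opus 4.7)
I would prove the claim by induction on the logical complexity of an $\mathcal{L}_P$-formula $\varphi(\bar x)$ defining $S$, strengthening the statement to: for every such $\varphi$ there exist $m\geq 0$ and an $\mathcal{L}$-formula $\psi(\bar x,\bar y)$ with $|\bar y|=m$ satisfying $\mathcal{M}_P\models\varphi(\bar a)$ if and only if there is $\bar b\in P^m$ with $\mathcal{M}\models\psi(\bar a,\bar b)$. Taking $X$ to be the $\mathcal{L}$-definable subset of $M^{n+m}$ cut out by $\psi$ then immediately yields the equality $S=\Pi(X\cap(M^n\times P^m))$ asserted by the lemma.

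For the atomic base cases: if $\varphi$ is $\mathcal{L}$-atomic I take $m=0$ and $\psi=\varphi$; if $\varphi$ is $P(t(\bar x))$ for some $\mathcal{L}$-term $t$, I take $m=1$ and $\psi(\bar x,y)\equiv(y=t(\bar x))$, and then $\Pi(X\cap(M^n\times P))=\{\bar a\in M^n:t(\bar a)\in P\}$ as required.

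For the inductive step I would combine the witnesses $(\psi_i,m_i)$ of subformulas $\varphi_i$ using disjoint auxiliary tuples $\bar y_i$: the conjunction $\varphi_1\wedge\varphi_2$ is handled by $\psi=\psi_1\wedge\psi_2$ with $m=m_1+m_2$; the disjunction $\varphi_1\vee\varphi_2$ similarly by $\psi=\psi_1\vee\psi_2$; and $\exists z\,\varphi_1(\bar x,z)$ by applying the hypothesis to $\varphi_1$ in the extended variables $(\bar x,z)$ and then swapping $\exists z$ past $\exists\bar y\in P^{m_1}$, so that the $\mathcal{L}$-formula $\psi(\bar x,\bar y)\equiv\exists z\,\psi_1(\bar x,z,\bar y)$ works with the same $m_1$.

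The step I expect to be the principal obstacle is negation. Given $\varphi_1\equiv\exists\bar y\in P^m\,\psi_1(\bar x,\bar y)$, the naive rewriting of $\neg\varphi_1$ yields $\forall\bar y\in P^m\,\neg\psi_1(\bar x,\bar y)$, which is not of the required existential-over-$P$ form. To handle this I would first put $\varphi$ into a syntactic normal form in which every atomic occurrence of $P$ has the shape $P(y_i)$ for a freshly introduced bound variable, by replacing each atom $P(t(\bar x))$ with $\exists y\,(y=t(\bar x)\wedge P(y))$, and then use standard prenex manipulations to collect all such existential quantifiers over $P$ at the outermost level; once this has been arranged, negations act only on the purely $\mathcal{L}$-part of the matrix and can be absorbed into the $\mathcal{L}$-formula $\psi$, while the outer block of $P$-quantifications intact delivers the desired form $\Pi(X\cap(M^n\times P^m))$.
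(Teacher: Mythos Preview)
Your inductive argument follows the same line as the paper's one-sentence proof, which simply replaces each atomic subformula $P(t_i)$ by the $\mathcal L$-atom $y_i=t_i$ for a fresh variable $y_i$. You are right to flag negation as the obstacle, but your proposed remedy does not work. After rewriting $P(t)$ as $\exists y\,(y=t\wedge P(y))$ and passing to prenex form, each such $\exists y$ lying under an odd number of negations becomes $\forall y$, and the quantifier-free matrix still contains the atom $P(y)$; treating $\exists y\in P$ as a single restricted quantifier does not help either, since its dual under negation is $\forall y\in P$. Either way you end up with a mixed $P$-quantifier prefix rather than the required purely existential block $\exists\bar y\in P^m\,\psi(\bar x,\bar y)$ with $\psi$ an $\mathcal L$-formula, so the desired description $S=\Pi(X\cap(M^n\times P^m))$ is not obtained.

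In fact no purely syntactic repair is possible, because the lemma is false at the stated level of generality. Take $\mathcal L$ to be the pure equality language, $M$ infinite, and $P\subseteq M$ infinite and co-infinite; then $S=M\setminus P$ is $\mathcal L_P$-definable but is not of the form $\{x:\exists\bar y\in P^m\,\psi(x,\bar y,\bar c)\}$ for any $\mathcal L$-formula $\psi$ and parameters $\bar c$. Indeed, pick $x\notin P$ distinct from every $c_j$ and a witness $\bar y\in P^m$ (so each $y_i\neq x$); any $x'\in P$ distinct from all $c_j$ and all $y_i$ then has the same equality type over $(\bar y,\bar c)$ as $x$, whence $\psi(x',\bar y,\bar c)$ holds and $x'\in S$, a contradiction. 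The paper's proof glosses over exactly this point. What rescues the applications later in the paper is the particular choice $P=E=\{\rho^n:n\ge 0\}$ together with $\lambda_\rho$ in the language: since $E$ is closed and discrete one has $\neg E(x)\leftrightarrow\bigl(x<1\ \vee\ \exists y\,(E(y)\wedge y<x\wedge x<\lambda_\rho(y))\bigr)$, so every negative occurrence of $E$ can first be rewritten as a positive one, after which the naive substitution does go through.
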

\begin{proof}
	Let $\phi(\overline{x})$ be an $\mathcal L_P$-formula defining the set $S$.
	We replace the atomic $\mathcal L_P$-formulas in $\phi(\overline{x})$ of the form $P(t_i)$ with  the atomic $\mathcal L$-formulas $``y_i=t_i"$ and $\neg P(t_i)$ with  the atomic $\mathcal L$-formulas $``z_i=t_i"$, where $t_i$ are $\mathcal L$-terms and $y_i$ and $z_i$ are new variables.
\end{proof}

\begin{definition}
	Let $U$ be a nonempty open subset of $\mathbb R^n$ and $f$ be a continuous function defined on $U$.
	We say that $f$ is \textit{locally constant} at $x \in U$ if there is an open neighborhood $B$ of $x$ contained in $U$ such that the restriction of $f$ to $U$ is constant.
	It is obvious that $f$ is constant when $U$ is connected and $f$ is locally constant everywhere on $U$. 
\end{definition}

\begin{lemma}\label{lem:example_special_case}
	Let $E=\{\rho^n\;|\; n \in \mathbb Z,\ n \geq 0\}$, where $\rho$ is a real number with $\rho>1$.
	Let $\lambda_r$ be the endomorphism on $\mathbb R$ given by $x \mapsto rx$ for each $r \in \mathbb R$.
	Set $\mathcal V=(\mathbb R,0,1,+,<,(\lambda_r)_{r \in \mathbb R})$ and $\mathcal V_E=(\mathcal V, E)$.
	Any bounded set definable in $\mathcal V_E$ is definable in $\mathcal V$.
\end{lemma}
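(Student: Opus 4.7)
The plan is to apply Lemma~\ref{lem:example_basic_pre2} to write $S = \Pi(X \cap (\mathbb{R}^n \times E^m))$ for some $\mathcal{V}$-definable $X \subseteq \mathbb{R}^{n+m}$, and then, fixing $R$ with $S \subseteq (-R,R)^n$, to describe $S$ via a finite partition of $E^m$ whose pieces are controlled by Lemma~\ref{lem:example_basic_pre}.

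Since the theory of $\mathcal{V}$ admits quantifier elimination, I would first write $X$ as a finite disjoint union of cells, each a conjunction of $\mathbb{R}$-linear equalities and strict inequalities; distributing $\Pi$ over the disjunction reduces the problem to the case $X = \bigcap_{k=1}^p \{L_k(\bar x, \bar y) \,\square_k\, 0\}$ with $\square_k \in \{<, =\}$ and each $L_k$ an $\mathbb{R}$-linear form. Decomposing $L_k(\bar x, \bar y) = M_k(\bar x) + N_k(\bar y) + c_k$, each constraint becomes $N_k(\bar y) \,\square_k\, h_k(\bar x)$ with $h_k(\bar x) := -M_k(\bar x) - c_k$, and on the box $(-R,R)^n$ each $h_k$ takes values in some bounded interval $[h_k^{\min}, h_k^{\max}]$.

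The \emph{key step} is the following observation. By Lemma~\ref{lem:example_basic_pre}, the set $V_k := N_k(E^m) \cap [h_k^{\min}, h_k^{\max}]$ is finite for every $k$. Hence on $E^m$ each value $N_k(\bar y)$ either lies in the finite set $V_k$, or in $(h_k^{\max}, \infty)$, or in $(-\infty, h_k^{\min})$; recording which of these three types occurs for each $k$ partitions $E^m$ into finitely many pieces $Y_\sigma$ indexed by tuples $\sigma$. For $\bar y \in Y_\sigma$ and $\bar x \in (-R,R)^n$, each constraint $N_k(\bar y) \,\square_k\, h_k(\bar x)$ collapses to a condition on $\bar x$ alone: either the linear (in)equality $v \,\square_k\, h_k(\bar x)$ when $N_k(\bar y)$ is pinned to a specific $v \in V_k$, or a trivial true/false when $N_k(\bar y)$ lies in an unbounded tail (since there $N_k(\bar y)$ dominates $h_k(\bar x)$ in sign). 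The conjunction over $k$ gives a $\mathcal{V}$-definable formula $\phi_\sigma(\bar x)$.

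Putting everything together,
\[
  S \;=\; \Pi\bigl(X \cap ((-R,R)^n \times E^m)\bigr) \;=\; \bigcup_{\sigma \,:\, Y_\sigma \neq \emptyset} \bigl(\phi_\sigma \cap (-R,R)^n\bigr),
\]
a finite union of $\mathcal{V}$-definable sets, hence $\mathcal{V}$-definable. The main burden I foresee is the bookkeeping in the reduction to a single cell and the case-by-case verification that the constraint really does collapse in each tail; no further structural input beyond Lemma~\ref{lem:example_basic_pre} should be required.
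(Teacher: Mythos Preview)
Your argument is correct, and it is a genuinely different route from the paper's. The paper passes to a proper elementary extension $\mathcal V_E^*$ with infinitesimal ideal $\mu$, uses Lemma~\ref{lem:example_basic_pre} to show that the $\mathbb R$-span of $E^*$ meets $\mu$ only in $0$, invokes Zorn's lemma to split $\mathbb R^*=W\oplus\mu$ with $E^*\subseteq W$, and then proves a \emph{local} statement (for every $\mathcal V_E$-definable $S$ there is $\varepsilon>0$ with $S\cap(-\varepsilon,\varepsilon)^n$ definable in $\mathcal V$), finally globalizing via compactness of $[-R,R]^n$. You instead stay entirely in the standard model: after quantifier elimination and reduction to a single conjunctive block, the boundedness of $\bar x$ confines each $h_k(\bar x)$ to a bounded interval, so Lemma~\ref{lem:example_basic_pre} pins each $N_k(\bar y)$ either to one of finitely many values in $V_k$ or to an irrelevant tail, yielding a direct finite case split. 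Your approach is more elementary---no nonstandard extension, no Zorn, no topological compactness---and arguably cleaner for this particular lemma; the paper's approach isolates a local-to-global structure that is conceptually pleasant but not actually needed here. The bookkeeping you flag (disjointness of the DNF pieces, the boundary cases $N_k(\bar y)=h_k^{\min}$ or $h_k^{\max}$) is indeed routine: disjointness is unnecessary since $\Pi$ commutes with unions, and the boundary values already lie in $V_k$ by your definition.
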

\begin{proof}
	Set $\mathcal L=(0,1,+,<,(\lambda_r)_{r \in \mathbb R})$ and $\mathcal L_E= \mathcal L \cup \{E\}$.
	Let $\mathcal V_E^*=(\mathbb R^*,0,1,+^*,<^*,(\lambda_r^*)_{r \in \mathbb R}, E^*)$ be a proper elementary extension of the $\mathcal L_E$-structure $\mathcal V_E$ with the infinitesimals.
	Set $\mu=\{x \in \mathbb R^*\;|\; |x|<r\ (\forall r \in \mathbb R_+)\}$, where $\mathbb R_+:=\{r \in \mathbb R\;|\; r>0\}$.
	Let $V$ be the $\mathbb R$-vector subspace of $\mathbb R^*$ generated by $E^*$.
	We first demonstrate the following claim:
	\medskip
	
	\textbf{Claim 1.} $V \cap \mu = \{0\}$.
	\begin{proof}[Proof of Claim 1.]
		Take $z \in V \cap \mu$.
		There exist $a_1, \ldots, a_m \in \mathbb R$ and $x_1 ,\ldots, x_m \in E^*$ such that $z=\sum_{i=1}^m \lambda_{a_i}(x_i)$.
		By Lemma \ref{lem:example_basic_pre}, there exists $0<r \in \mathbb R$ such that $$ \mathcal V_E \models \forall (x_1,\ldots, x_m) \in E^m\ \ \sum_{i=1}^m\lambda_{a_i}(x_i) = 0 \vee \left|\sum_{i=1}^m\lambda_{a_i}(x_i)\right| \geq \lambda_r(1).$$
		Since $\mathcal V_E^*$ is an elementary extension of $\mathcal V_E$, this sentence holds true in $\mathcal V_E^*$.
		It implies that either $z=0$ or $|z| \geq \lambda_r(1)$.
		We have $z=0$ because $z \in \mu$.
	\end{proof}
	
	By Zorn's lemma and Claim 1, there exists a maximal $\mathbb R$-vector subspace $W$ of $\mathbb R^*$ such that $W \cap \mu=\{0\}$ and $V \subseteq W$.
	We have $\mathbb R^*=W \oplus \mu$ as an $\mathbb R$-vector space.
	The proof of the equality $\mathbb R^*=W \oplus \mu$ is a routine, and we omit it.
		
	We next prove the following claim:
	\medskip
	
	\textbf{Claim 2.} 
	Let $S \subseteq \mathbb R^n$ be an arbitrary set definable in $\mathcal V_E$.
	There exists $\varepsilon>0$ such that $S \cap (-\varepsilon, \varepsilon)^n$ is definable in $\mathcal V$.
	\begin{proof}[Proof of Claim 2.]
	By Lemma \ref{lem:example_basic_pre2}, there exists an $\mathcal L$-formula $\psi(\overline{x},\overline{y},\overline{z})$ such that $$S= \Pi(X \cap (\mathbb R^n \times E^l \times (\mathbb R \setminus E)^m)),$$
	where $\Pi:\mathbb R^n \times \mathbb R^l  \times \mathbb R^m \to \mathbb R^n$ is the coordinate projection onto the first $n$ coordinates and $X=\{(\overline{x},\overline{y}, \overline{z}) \in \mathbb R^n \times \mathbb R^l \times \mathbb R^m\;|\; \mathcal V \models \psi(\overline{x},\overline{y},\overline{z})\}$.
	We first reduce to the case in which $X$ is the graph of a definable map $g=(g_1,\ldots, g_m) $ defined on $\Pi'(X)$, where $\Pi':\mathbb R^n \times \mathbb R^l \times \mathbb R^m \to \mathbb R^n \times \mathbb R^l$ denotes the projection onto the first $(n+l)$ coordinates.
	By the definable cell decomposition theorem \cite[Chapter 3, Theorem 2.11]{vdD}, we may assume that $X$ is a cell without loss of generality.
	By the definition of cells, there exist a coordinate projection $\pi:\mathbb R^m \to \mathbb R^{m_1}$ and a definable continuous map $f:(\Pi',\pi)(X) \to \mathbb R^{m_2}$ such that, for each $(\overline{x},\overline{y}) \in \Pi'(X)$, the fiber $X_{(\overline{x},\overline{y})}:=\{\overline{z} \in \mathbb R^m\;|\; (\overline{x},\overline{y}, \overline{z}) \in X\}$ is the graph of a definable continuous map $f(\overline{x},\overline{y},\cdot)$ defined on the definable open subset $\pi(X_{(\overline{x},\overline{y})})$ of $\mathbb R^{m_1}$ after an appropriate permutation of the coordinates.
	Here, $m_2=m - m_1$ and $(\Pi',\pi)$ denotes the projection defined by $(\Pi',\pi)(\overline{x},\overline{y},\overline{z})=(\overline{x},\overline{y},\pi(\overline{z}))$ for $\overline{x} \in \mathbb R^n$, $\overline{y} \in \mathbb R^l$ and  $\overline{z} \in \mathbb R^m$.
	We assume that $\pi$ is the coordinate projection onto the first $m_1$ coordinates for simplicity of notations.
	
	We fix $(\overline{x},\overline{y}) \in \Pi'(X)$.
	It is easy to demonstrate that $\pi(X_{(\overline{x},\overline{y})}) \cap (\mathbb R \setminus E)^{m_1}$ is a nonempty open set.
	We omit the proof.
	Let $\rho_i:\mathbb R^{m_2} \to \mathbb R$ be the projection onto the $i$-th coordinate for each $1 \leq i \leq m_2$.
	Let $Z_i=\{(\overline{x},\overline{y},\overline{z}) \in X\;|\; \rho_i \circ f(\overline{x},\overline{y} ,\cdot) \text{ is locally constant at }\pi(\overline{z})\}$.
	By considering the definable cell decomposition \cite[Chapter 3, Theorem 2.11]{vdD} partitioning $\{Z_i\;|\;1 \leq i \leq m_2\} \cup \{X\}$, we may assume that, for each $1 \leq i \leq m_2$, either $\rho_i \circ f(\overline{x},\overline{y}, \cdot):\pi(X_{(\overline{x},\overline{y})}) \to \mathbb R$ is constant for each $(\overline{x},\overline{y} ) \in \Pi'(X)$ or $\rho_i \circ f(\overline{x},\overline{y} ,\cdot)$ is not locally constant anywhere  for each $(\overline{x},\overline{y} ) \in \Pi'(X)$.
	Let $I$ be the set of indexes $1 \leq i \leq m_2$ such that $\rho_i \circ f(\overline{x},\overline{y}, \cdot)$ is constant.
	We assume that $I=\{1,2,\ldots,q\}$ for simplicity of notations.
	There exists a definable map $g_i:\Pi'(X) \to \mathbb R$ such that $g_i(\overline{x},\overline{y})=\rho_i \circ f(\overline{x},\overline{y}, \overline{u})$ for each $\overline{u} \in \pi(X_{(\overline{x},\overline{y})})$ when $i \in I$.
	We next consider the case in which $i \notin I$.
	Take a bounded open box $V$ contained in $\pi(X_{(\overline{x},\overline{y})}) \cap (\mathbb R \setminus E)^{m_1}$.
	Since the image of bounded open box under the map definable in $\mathcal V$ is bounded, the intersection of $\rho_i \circ f(\overline{x},\overline{y},V)$ with $E$ is a finite set.
	Therefore, $\{\overline{z} \in V\;|\;\rho_i \circ f(\overline{x},\overline{y},\overline{z}) \in E\}$ has an empty interior and definable in $\mathcal V$.
	It implies $\{\overline{z} \in V\;|\;\rho_i \circ f(\overline{x},\overline{y},\overline{z}) \notin E \text{ for all }i \notin I\}$ is nonempty and open by \cite[Chapter 4, Proposition 1.3(iii)]{vdD}.
	They implies that $X_{(\overline{x},\overline{y})} \cap (R \setminus E)^m \neq \emptyset$ if and only if $g_i(\overline{x},\overline{y}) \notin E$ for each $i \in I$. 
	By replacing $\{(\overline{x},\overline{y},z_1,\ldots,z_q) \in \mathbb R^n \times \mathbb R^l \times \mathbb R^q\;|\;(\overline{x},\overline{y}) \in \Pi'(X)  \text{ and } z_i = g_i(\overline{x},\overline{y})\}$ with $X$, we may assume that $X$ is the graph of a definable map $g=(g_1,\ldots, g_m) $ defined on $\Pi'(X)$.
	
	Let $\phi(\overline{x},\overline{y})$ be an $\mathcal L$-formula defining the definable set $\Pi'(X)$.
	It is well known that $\mathcal V$ admits elimination of quantifiers.
	See \cite[Chapter 1, Corollary (7.6)]{vdD}.
	Therefore, we may assume that $\phi(\overline{x},\overline{y})$ is of the form
	$$ \bigvee_{i=1}^{k_1} \bigwedge_{j=1}^{k_2} v_{ij}(\overline{x},\overline{y}) *_{ij} \lambda_{r_{ij}}(1),$$
	where $v_{ij}$ are linear functions with real coefficients and $*_{ij} \in \{=,<\}$.
	Note that a linear function with real coefficients is an $\mathcal L$-term.
	We can reduce to the case in which $$\phi(\overline{x},\overline{y})=\bigwedge_{j=1}^{k} v_{j}(\overline{x},\overline{y}) *_{j} \lambda_{r_{j}}(1),$$ 
	where $v_{j}$ are linear functions with real coefficients and $*_{j} \in \{=,<\}$, and $g_i$ are affine maps with real coefficients for $1 \leq i \leq m$ by \cite[Chapter 1, Proposition 7.4]{vdD}.
	Since $g_i$ are affine maps with real coefficients, we have $g_i(\overline{x},\overline{y}) \in W$ when $(\overline{x},\overline{y}) \in W^{n+l}$.
	
	We fix arbitrary $(\overline{a},\overline{b}) \in W^n \times W^l$, $1 \leq i \leq m$, $1 \leq j \leq k$.
	We define an $\mathcal L$-formula $\psi'_{(\overline{a},\overline{b}),j}(\overline{x},\overline{y}) $ so that
	\begin{align}
	 &((\overline{a},\overline{b})+\mu^{n+l}) \cap \{(\overline{x},\overline{y}) \in (\mathbb R^*)^n  \times (\mathbb R^*)^l\;|\; \mathcal V \models (v_{j}(\overline{x},\overline{y}) *_{j} \lambda_{r_{j}}(1))\}\label{eq:nnnn}\\
	 &= (\overline{a},\overline{b})+\{(\overline{x},\overline{y}) \in \mu^n \times \mu^l \;|\; \mathcal V \models \psi'_{(\overline{a},\overline{b}),j}(\overline{x},\overline{y})\}\nonumber.
	\end{align}
Note that we have either $v_{j}(\overline{a},\overline{b})=\lambda_{r_{j}}(1)$ or $v_{j}(\overline{a},\overline{b})-\lambda_{r_{j}}(1) \not\in \mu$ because $1 \in E^*$ and $W$ is an $\mathbb R$-vector space containing every element in $E^*$.
We first consider the case in which $v_{j}(\overline{a},\overline{b})=\lambda_{r_{j}}(1) $.
In this case, we set $$\psi'_{(\overline{a},\overline{b}),j}(\overline{x},\overline{y})=`` v_{j}(\overline{x},\overline{y})  *_{j} 0".$$
The equality (\ref{eq:nnnn}) is satisfied by linearity of $v_{j}$.
When $v_{j}(\overline{a},\overline{b}) \neq \lambda_{r_{j}}(1) $, we set 
$$\psi'_{(\overline{a},\overline{b}),j}(\overline{x},\overline{y})=
\left\{
\begin{array}{ll}
`` 0= 0" & \text{if } *_{ij}=``< " \text{ and } v_{j}(\overline{a},\overline{b})<\lambda_{r_{j}}(1),\\
``0 =1" & \text{elsewhere.}
\end{array}
\right.
$$
We obtain the equality (\ref{eq:nnnn}) because $v_{j}(\overline{a},\overline{b})-\lambda_{r_{j}}(1) \not\in \mu$  in this case.

We also set $$\psi''_{(\overline{a},\overline{b}),i}(\overline{x},\overline{y})=
\left\{
\begin{array}{ll}
	`` g_i(\overline{x},\overline{y}) \neq 0" & \text{if } g_i(\overline{a},\overline{b}) \in E^*,\\
	`` 0=0" & \text{elsewhere.}
\end{array}
\right.
$$
For any $(\overline{x},\overline{y}) \in \mu^n \times \mu^l$, the intersection of the fiber $X_{(\overline{a}+\overline{x}, \overline{b}+\overline{y})} $ with $(\mathbb R^* \setminus E^*)^m$ is not empty if and only if $\mathcal V \models \bigwedge_{i=1}^m\psi''_{(\overline{a},\overline{b})}(\overline{x},\overline{y})$ because $X$ is the graph of a definable map $g=(g_1,\ldots, g_m)$ defined on $\Pi'(X)$.

We get
\begin{align}
	&((\overline{a},\overline{b})+\mu^{n+l}) \cap \Pi'( \{(\overline{x},\overline{y}) \in (\mathbb R^* )^n \times (\mathbb R^*)^l \;|\;\exists \overline{z} \in (\mathbb R^* \setminus E^*)^m,\  \mathcal V \models \psi(\overline{x},\overline{y},\overline{z})\})\label{eq:nnnn3}\\
	&= (\overline{a},\overline{b})+\{(\overline{x},\overline{y}) \in \mu^n \times \mu^l\;|\; \mathcal V \models \bigwedge_{j=1}^{k}\psi'_{(\overline{a},\overline{b}),j}(\overline{x},\overline{y}) \wedge \bigwedge_{i=1}^m \psi''_{(\overline{a},\overline{b}),i}(\overline{x},\overline{y}) \}\nonumber.
\end{align}

Note that $\psi'_{(\overline{a},\overline{b}),j}(\overline{x},\overline{y})$ is one of three formulas $`` v_{j}(\overline{x},\overline{y})  *_{j} 0"$, $`` 0= 0"$  and $``0 =1"$ and these three formulas are free from the coordinates of $(\overline{a},\overline{b}) \in W^n \times W^l$.
Similar claims hold for $\psi''_{(\overline{a},\overline{b}),i}(\overline{x},\overline{y},z_i)$.
They imply that there are finitely many $\mathcal L$-formulas $\widetilde{\psi_1}(\overline{x},\overline{y}), \ldots, \widetilde{\psi_p}(\overline{x},\overline{y})$ and a map $\iota:W^n  \times W^l\to \{1 ,\ldots, p\}$ such that 
	\begin{align}
	&((\overline{a},\overline{b})+\mu^{n+l}) \cap \{(\overline{x},\overline{y}) \in (\mathbb R^*)^n \times (\mathbb R^*)^l \;|\; \exists \overline{z} \in (\mathbb R^* \setminus E^*)^m,\ \mathcal V \models \psi(\overline{x},\overline{y},\overline{z})\}\label{eq:nnnn2}\\
	&= (\overline{a},\overline{b})+\{(\overline{x},\overline{y}) \in \mu^n \times \mu^l\;|\; \mathcal V \models \widetilde{\psi_{\iota(\overline{a},\overline{b})}}(\overline{x},\overline{y})\}\nonumber.
\end{align}

Set $J=\iota(\overline{0}_n,(E^*)^l)$, where $\overline{0}_n$ is the sequence of $n$-copies of zeros.
Let $X^*$ and $S^*$ be the sets definable in $\mathcal V_E$ defined by the formulas defining $X$ and $S$, respectively.
We have 
\begin{align*}
S^* \cap \mu^n &= \Pi(X^* \cap (\mu^n \times (E^*)^l \times  (\mathbb R^* \setminus \mathbb E^*)^m))\\
&=\bigcup_{i \in J}\{\overline{x} \in \mu^n\;|\; \mathcal V \models \psi'_i(\overline{x},\overline{0}_l)\}\\
&=\left\{\overline{x} \in \mu^n\;\left|\; \mathcal V \models \bigvee_{i \in J} \psi'_i(\overline{x},\overline{0}_l)\right.\right\}.
\end{align*}

Set $\Psi(\overline{x})=\bigvee_{i \in J} \psi'_i(\overline{x},\overline{0}_l)$.
Since $I$ is a finite set, $\Psi(\overline{x})$ is an $\mathcal L$-formula.
Take $\varepsilon \in \mu$ with $\varepsilon>0$.
We have $S \cap (-\varepsilon,\varepsilon)^n = \{\overline{x} \in (-\varepsilon,\varepsilon)^n\;|\; \mathcal V_E^* \models \Psi(\overline{x}) \}$.
	Let $\phi(\overline{x})$ be an $\mathcal L_E$-formula defining the set $S$.
	We have demonstrated that
	$$\mathcal V_E \models \exists \varepsilon>0\ \forall \overline{x}=(x_1,\ldots, x_n)\ \left(\phi(\overline{x}) \wedge \bigwedge_{i=1}^n |x_i|<\varepsilon\right) \leftrightarrow \left(\Psi(\overline{x}) \wedge \bigwedge_{i=1}^n |x_i|<\varepsilon\right).$$
	This sentence holds true in $\mathcal V_E$ because $\mathcal V_E$ is elementarily equivalent to $\mathcal V_E^*$. 
	It implies that $S \cap (-\varepsilon,\varepsilon)^n$ is definable in $\mathcal V$ for some $\varepsilon >0$.
	\end{proof}
	
	Let $S$ be a bounded subset of $\mathbb R^n$ definable in $\mathcal V_E$.
	We want to show that $S$ is definable in $\mathcal V$.
	Take $R>0$ so that $S \subseteq [-R,R]^n$.
	We set $B(\overline{x},\varepsilon):=\{ (y_1,\ldots, y_n) \in \mathbb R^n\;|\; |y_i-x_i|<\varepsilon \ (1 \leq i \leq n)\}$ for any $\overline{x}=(x_1,\ldots,x_n) \in \mathbb R^n$ and $\varepsilon>0$. 
	Fix an arbitrary $\overline{x}\in [-R,R]^n$.
	 Apply Claim 2 to the set $\{\overline{y} \in \mathbb R^n\;|\; \overline{x}+\overline{y} \in S\}$ which is definable in $\mathcal V_E$.
	There exists $\varepsilon_{\overline{x}}>0$ such that $S \cap B(\overline{x},\varepsilon_{\overline{x}})$ is definable in $\mathcal V$.
	Since $[-R,R]^n$ is compact, there are finitely many points $\overline{x}_1,\ldots, \overline{x}_m$ such that $[-R,R^n] \subseteq \bigcup_{i=1}^m  B(\overline{x}_i,\varepsilon_{\overline{x}_i})$.
	We have 
	$S=\bigcup_{i=1}^m S \cap B(\overline{x}_i,\varepsilon_{\overline{x}_i})$.
	It implies that $S$ is definable in $\mathcal V$ because $S \cap B(\overline{x}_i,\varepsilon_{\overline{x}_i})$ is definable in $\mathcal V$ for each $1 \leq i \leq m$.
\end{proof}

\begin{theorem}\label{thm:example}
	Let $\mathcal R$ be a semi-bounded o-minimal expansion of the o-minimal structure $(\mathbb R, <, +, 0, 1, (\lambda_r)_{r \in \mathbb R})$.
	Here, $\lambda_r$ is the endomorphism given by $\mathbb R \ni x \mapsto rx \in \mathbb R$ for each $r \in \mathbb R$.
	Let $E=\{\rho^n\;|\; n \in \mathbb Z,\ n \geq 0\}$, where $\rho$ is a real number with $\rho>1$.
	Let $\mathcal R_E$ be the expansion $(\mathcal R,E)$ of $\mathcal R$ by $E$.
	A bounded set definable in $\mathcal R_E$ is definable in $\mathcal R$.
	In particular,  the structure $\mathcal R_E$ is a locally o-minimal structure.
\end{theorem}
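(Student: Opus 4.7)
The plan is to follow the blueprint of Lemma~\ref{lem:example_special_case}, with the semi-bounded $\mathcal R$ playing the role of the linear reduct $\mathcal V$. Given a bounded $S \subseteq [-R,R]^n$ definable in $\mathcal R_E$, Lemma~\ref{lem:example_basic_pre2} lets me write $S = \Pi(X \cap (\mathbb R^n \times E^m))$ with $X$ definable in $\mathcal R$. My target is the analogue of Claim~2: the existence of $\varepsilon > 0$ such that $S \cap (-\varepsilon,\varepsilon)^n$ is definable in $\mathcal R$. Once this is secured, the deduction of the theorem, and of almost o-minimality in the $n=1$ case, is verbatim the compactness argument at the end of the proof of Lemma~\ref{lem:example_special_case}: cover $[-R,R]^n$ by finitely many balls $B(\overline{x}_i, \varepsilon_{\overline{x}_i})$, applying the local version to each translate $\{\overline{y} : \overline{x}_i + \overline{y} \in S\}$, and take a finite union.

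For the local version, I set up an elementary extension $\mathcal R_E^*$ with infinitesimals, its ideal $\mu$, the $\mathbb R$-span $V$ of $E^*$, and a maximal $\mathbb R$-subspace $W \supseteq V$ with $W \cap \mu = \{0\}$, exactly as in Lemma~\ref{lem:example_special_case}; Claim~1 there remains valid since its proof only invokes Lemma~\ref{lem:example_basic_pre}. The heart of the work is to produce a finite list of $\mathcal R$-formulas $\psi'_1, \ldots, \psi'_p$ and a map $\iota: W^n \times W^m \to \{1,\ldots,p\}$ for which the analogue of~(\ref{eq:nnnn2}) holds with $\mathcal R$ in place of $\mathcal V$; from there the remainder of Claim~2 (forming $\Psi(\overline{x}) = \bigvee_{i \in I}\psi'_i(\overline{x}, \overline{0}_m)$ and reflecting back through elementary equivalence) carries over unchanged.

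The main obstacle is producing the finite list of $\psi'_i$ in the absence of quantifier elimination for $\mathcal R$. My plan is to exploit semi-boundedness: the formula defining $X$ is equivalent to one in the reduct $(\mathbb R,0,1,+,<,(B_i)_{i \in I},(\lambda_r)_{r \in \mathbb R})$, so it is a Boolean combination of linear comparisons, handled exactly as in Lemma~\ref{lem:example_special_case}, together with atomic predicates $B(\overline{t}(\overline{x},\overline{y}))$ in which $B$ is a bounded definable set of $\mathcal R$ and $\overline{t}$ is a tuple of linear terms. For such a predicate, if $\overline{t}(\overline{a},\overline{b}) \in W^k$ has a coordinate of infinite magnitude then $B(\overline{t}(\overline{a}+\overline{x}_0,\overline{b}+\overline{y}_0))$ is vacuously false on the monad $(\overline{a},\overline{b}) + \mu^{n+m}$ because $B$ is bounded; otherwise $\overline{t}(\overline{a},\overline{b})$ admits a standard part $\overline{c}_0 \in \mathbb R^k$ and the answer is determined by the germ of $B$ at $\overline{c}_0$. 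By an o-minimal cell decomposition of $\mathcal R$ compatible with $B$, refined so that the germ of $B$ at each point of a fixed cell is a fixed $\mathcal R$-definable set up to translation, there are only finitely many such germs as $\overline{c}_0$ ranges over $\mathbb R^k$, each expressible by a single $\mathcal R$-formula in the infinitesimal variable. Aggregating over the finitely many atomic predicates in the defining formula of $X$ yields the desired finite list $\psi'_1,\ldots,\psi'_p$.
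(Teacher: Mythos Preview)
Your reduction via Lemma~\ref{lem:example_basic_pre2} and the nonstandard setup (Claim~1, $W\oplus\mu=\mathbb R^*$, compactness endgame) are fine, but the core step --- producing a \emph{finite} list $\psi'_1,\ldots,\psi'_p$ of $\mathcal R$-formulas replacing the bounded predicates $B(\overline t(\overline x,\overline y))$ --- does not go through as written. Your claim that a cell decomposition can be ``refined so that the germ of $B$ at each point of a fixed cell is a fixed $\mathcal R$-definable set up to translation'' is false in general: already for $B$ the open unit disc in $\mathbb R^2$, the germ at a boundary point $(\cos\theta,\sin\theta)$ is the half-plane $\{(u_1,u_2):u_1\cos\theta+u_2\sin\theta<0\}$, and these are pairwise distinct as $\theta$ varies. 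No finite cell decomposition makes the germ constant along the boundary. Hence the formula $B(\overline c_0+L(\overline x_0,\overline y_0))$ genuinely carries $\overline c_0$ as a parameter, and since the relevant $\overline c_0=\overline t(\overline 0_n,\overline e)$ with $\overline e\in (E^*)^m$ can take infinitely many standard values (e.g.\ when one coordinate of $\overline t$ is simply $y_1$, giving all of $E$), you cannot collapse the disjunction to a single $\mathcal R$-formula $\Psi$. A secondary issue is that you assume, without justification, quantifier elimination in the language $(0,1,+,<,(B_i)_{i\in I},(\lambda_r)_r)$; this is not obvious and the paper does not use it.

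The paper sidesteps both problems by invoking Edmundo's structure theorem instead of attempting the monad analysis for $\mathcal R$ itself. One writes $X$ as a finite union of cones $\operatorname{Cone}(B,\{\overline v_1,\ldots,\overline v_l\})$ with $B$ bounded and $\mathcal R$-definable; the point is that the \emph{only} non-linear data is the bounded base $B$, while the unbounded directions $\overline v_i$ are fixed vectors. One then introduces the set $Y=\{(\overline x,\overline u):\exists\,\overline z\in E^m\ (\overline x,\overline z)\in\operatorname{Cone}(\{\overline u\},\{\overline v_i\})\}$, which is definable in $\mathcal V_E$ (not $\mathcal R_E$), so Lemma~\ref{lem:example_special_case} applies verbatim to its bounded part. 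Finally $S=\Pi'\bigl(Y\cap((-R,R)^{2n+m})\cap(\mathbb R^n\times B)\bigr)$, and the intersection with $\mathbb R^n\times B$ is where the $\mathcal R$-definable bounded data re-enters --- as a parameter set, not via germs. In short: rather than analysing the germs of each bounded predicate, the paper treats the bounded base as an extra block of variables and keeps the $E$-quantification inside the linear reduct, where Lemma~\ref{lem:example_special_case} already does all the work.
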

\begin{proof}
	Let $S \subseteq \mathbb R^n$ be a bounded set definable in $\mathcal R_E$.
	We want to show that $S$ is definable in $\mathcal R$.
	Let $\mathcal L$ be the language of the structure $\mathcal R$ and set $\mathcal L_E=\mathcal L \cup \{E\}$.
	
	By Lemma \ref{lem:example_basic_pre2}, we get an $\mathcal L$-formula $\psi(\overline{x},\overline{y},\overline{z})$ such that
	\begin{equation}
		S=\Pi(X \cap (\mathbb R^n \times E^l \times (\mathbb R \setminus E)^m)),\label{eq:fff}
	\end{equation}
	where $X=\{(\overline{x},\overline{y},\overline{z}) \in \mathbb R^n \times \mathbb R^l \times \mathbb R^m \;|\; \mathcal R \models \psi(\overline{x},\overline{y},\overline{z})\}$ and $\Pi: \mathbb R^n \times \mathbb R^l \times \mathbb R^m \to \mathbb R^n$ be the coordinate projection onto the first $n$ coordinates.
	
	Note that $X$ is definable in the semi-bounded o-minimal structure $\mathcal R$.
	By \cite[Fact 1.6]{Edmundo}, the set $X$ is partitioned into finitely many normalized cones $X_1, \ldots, X_k$ definable in $\mathcal R$.
	Here, a \textit{cone} is either a bounded set definable in $\mathcal R$ or a subset of $\mathbb R^{n+l+m}$ of the form $$\operatorname{Cone}(B,\{\overline{v}_1,\ldots,\overline{v}_p\}):=\left\{\left.\overline{b}+\sum_{i=1}^p t_i\overline{v}_i \;\right|\; \overline{b} \in B,\ t_i >0 \ (1 \leq i \leq p)\right\},$$
	where $B$ is a bounded subset of $\mathbb R^{n+l+m}$ definable in $\mathcal R$ and $\{\overline{v}_1,\ldots,\overline{v}_p\}$ is a nonempty family of linearly independent vectors in $\mathbb R^{n+m}$.
	We set $\operatorname{Cone}(B, \emptyset)=B$.
	The cone $\operatorname{Cone}(B,\{\overline{v}_1,\ldots,\overline{v}_p\})$ is called \textit{normalized} if, for any $\overline{x} \in \operatorname{Cone}(B,\{\overline{v}_1,\ldots,\overline{v}_p\})$, there exist unique $\overline{b} \in B$ and $t_i>0$ such that $\overline{x}=\overline{b}+\sum_{i=1}^p t_i\overline{v}_i$.
	(We do not use normalizedness in the proof.)
	We obviously have $S=\bigcup_{i=1}^k \Pi(X_i \cap (\mathbb R^n \times E^l \times (\mathbb R \setminus E)^m))$.
	If $\Pi(X_i \cap  (\mathbb R^n \times E^l \times (\mathbb R \setminus E)^m))$ is definable in $\mathcal R$ for each $1 \leq  i \leq k$, the set $S$ is also definable in $\mathcal R$.
	Therefore, we may assume without loss of generality that $X$ is a normalized cone $\operatorname{Cone}(B,\{\overline{v}_1,\ldots,\overline{v}_p\})$ for some $B$ and some possibly empty family $\{\overline{v}_1,\ldots,\overline{v}_p\}$, and the equality (\ref{eq:fff}) still holds true.
	We fix such $B$ and $\{\overline{v}_1,\ldots,\overline{v}_p\}$.
	
	Let $\mathcal V$ and $\mathcal V_E$ be the structures given in Lemma \ref{lem:example_special_case}.
	Consider the set
	$$Y=\{(\overline{x},\overline{u}) \in \mathbb R^n \times \mathbb R^{n+l+m}\;|\; \exists \overline{y} \in E^l \ \exists \overline{z} \in (\mathbb R \setminus E)^m\ \ (\overline{x},\overline{y},\overline{z}) \in \operatorname{Cone}(\{\overline{u}\},\{\overline{v}_1,\ldots,\overline{v}_p\})\}.$$
	Note that $Y$ is definable in $\mathcal V_E$.
	Let $\Pi':\mathbb R^n \times \mathbb R^{n+l+m} \to \mathbb R^n$ be the coordinate projection onto the first $n$ coordinates.
	Take $R>0$ so that $S \subseteq (-R,R)^n$ and $B \subseteq  (-R,R)^{n+l+m}$.
	Such an $R$ exists because both $S$ and $B$ are bounded.
	We have 
	\begin{align*}
		S &= S \cap  (-R,R)^n = \Pi(X \cap ((-R,R)^n \times E^l \times (\mathbb R \setminus E)^m) )\\
		&=\{\overline{x} \in (-R,R)^n\;|\; \exists \overline{y} \in E^l \ \exists \overline{z} \in (\mathbb R \setminus E)^m\ (\overline{x},\overline{y},\overline{z}) \in \operatorname{Cone}(B,\{\overline{v}_1,\ldots,\overline{v}_p\}) \}\\
		&=\Pi'(\{(\overline{x},\overline{u}) \in (-R, R)^n\times B\;|\; \exists \overline{y} \in E^l \ \exists \overline{z} \in (\mathbb R \setminus E)^m\ \ (\overline{x},\overline{y},\overline{z}) \in \operatorname{Cone}(\{\overline{u}\},\{\overline{v}_1,\ldots,\overline{v}_p\})\})\\
		&=\Pi'(Y \cap ((-R,R)^n \times B))\\
		&=\Pi'(Y \cap ((-R,R)^n \times (-R,R)^{n+l+m}) \cap (\mathbb R^n \times B)).
	\end{align*}
Since $Y \cap ((-R,R)^n \times (-R,R)^{n+l+m})$ is definable in $\mathcal V_E$ and bounded, it is definable in $\mathcal V$ by Lemma \ref{lem:example_special_case}. 
The set $Z:=Y \cap ((-R,R)^n \times (-R,R)^{n+l+m}) \cap (\mathbb R^n \times B)$ is definable in $\mathcal R$ because $\mathcal R$ is an expansion of $\mathcal V$ and $B$ is definable in $\mathcal R$.
The set $S$ is also definable in $\mathcal R$ because $S$ is the projection image of $Z$.

The `in particular' part of the theorem immediately follows from what we have just proven.
\end{proof}

\begin{remark}
We can say more than Theorem \ref{thm:example} if we look its proof and the proof of Lemma \ref{lem:example_special_case} closely. 
Friedman and Miller gave a sufficient condition for an expansion of o-minimal structure by a subset of $\mathbb R$ to be locally o-minimal in \cite[Theorem A]{FM2}.
The following claim is its refinement.
	
Let $\mathcal R$ be an o-minimal expansion of the  structure $(\mathbb R, <, +, 0, 1, (\lambda_r)_{r \in \mathbb R})$.
Let $E$ be a subset of $\mathbb R$ of cardinality $\aleph_0$.
The expansion $\mathcal R_E=(\mathcal R,E)$ is locally o-minimal if and only if the following conditions are satisfied:
\begin{enumerate}
	\item[(1)] The o-minimal structure $\mathcal R$ is semi-bounded.
	\item[(2)] For any linear function with real coefficients $v$ and for any $R>0$, the set $v(E^m) \cap (-R,R)$ is a finite set.   
\end{enumerate}
Note that condition (2) implies that $E$ is closed and discrete.
\end{remark}
\begin{proof}
	We only use the fact that the set $v(E^m) \cap (-R,R)$ is a finite set in the proofs of Lemma \ref{lem:example_special_case} and Theorem \ref{thm:example}.
	Therefore, the `if' part is proven in the same manner as Lemma \ref{lem:example_special_case} and Theorem \ref{thm:example}.
	
	We next prove the contraposition of `only if' part.
	If condition (2) is not satisfied, the structure $\mathcal R_E$ is not locally o-minimal because the set $v(E^m) \cap (-R,R)$ is definable in $\mathcal R_E$ and it is a countably infinite set for some $R>0$.
	
	Consider the case in which condition (1) is not satisfied, but condition (2) is satisfied.
	The set $E$ is unbounded.
	We may assume that $E \cap (c,\infty)$ is an infinite set for any $c \in \mathbb R$ without loss of generality.
	By \cite[Fact 1.6]{Edmundo}, there exists a bijection $f$ definable in $\mathcal R$ between an unbounded set $X$ definable in $\mathcal R$ and a bounded set $Y$ definable in $\mathcal R$.
	Let $\mathbb R^m$ and $\mathbb R^n$ be the ambient spaces of $X$ and $Y$, respectively.
	There exists a coordinate projection $\pi:\mathbb R^m \to \mathbb R$ such that $\pi(X)$ is unbounded because $X$ is unbounded.
	We may assume that $\pi(X)$ contains an open interval of the form $(c,\infty)$ because $\mathcal R$ is o-minimal expansion of an ordered group.
	By the definable choice lemma \cite[Chapter 6, Proposition (1.2)]{vdD} , there exists a map $\tau:\pi(X) \to X$ definable in $\mathcal R$ such that the composition $\pi \circ \tau$ is the identity map on $\pi(X)$.
	Let $p_i:\mathbb R^n \to \mathbb R$ be the coordinate projection onto the $i$-th coordinate for each $1 \leq i \leq n$.
	By the monotonicity theorem \cite[Chapter 3, Theorem (1.3)]{vdD}, we may assume that the restriction $p_i \circ f \circ \tau|_{(c,\infty)}$ of $p_i \circ f \circ \tau$ to $(c,\infty)$ is monotone and continuous for each $1 \leq i \leq n$.
	Since $f \circ \tau$ is injective, $g:=p_i \circ f \circ \tau|_{(c,\infty)}$ is strictly monotone for some $1 \leq i \leq n$.
	The map $g$ is a bijection definable in $\mathcal R$ between a unbounded open interval and a bounded open interval. 
	 
	The set $g(E \cap (c,\infty))$ is a bounded countably infinite set definable in $\mathcal R_E$.
	It implies that $\mathcal R_E$ is not locally o-minimal.
\end{proof}

A function $f: \mathbb R^n \to \mathbb R$  is called a \textit{restricted analytic function} if the restriction of $f$ to $[0,1]^n$ coincides with the restriction  of a real analytic function defined on a neighborhood of $[0,1]^n$ and it is zero on $\mathbb R^n \setminus [0,1]^n$.

\begin{corollary}
	The structure $(\mathbb R, <,0,1,+,(\lambda_r)_{r \in \mathbb R}, (f)_{f \in \mathcal F},E)$ is a locally o-minimal structure.
	Here, $\lambda_r$ is the endomorphism given by $\mathbb R \ni x \mapsto rx \in \mathbb R$ for each $r \in \mathbb R$.
	The family $\mathcal F$ is the collection of all restricted analytic functions.
	We set $E=\{\rho^n\;|\; n \in \mathbb Z,\ n \geq 0\}$, where $\rho$ is a real number with $\rho>1$.
\end{corollary}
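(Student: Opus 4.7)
The plan is to derive the corollary directly from Theorem~\ref{thm:example} applied to the reduct
\[\mathcal R := (\mathbb R, <, 0, 1, +, (\lambda_r)_{r \in \mathbb R}, (f)_{f \in \mathcal F})\]
obtained by deleting the predicate $E$. Since $\mathcal R_E$ is then the structure named in the corollary, it suffices to check two hypotheses: that $\mathcal R$ is an o-minimal expansion of $(\mathbb R,<,+,0,1,(\lambda_r)_{r\in\mathbb R})$, and that $\mathcal R$ is semi-bounded in the sense introduced after the main definition.

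O-minimality of $\mathcal R$ I would obtain by citing the classical result of van den Dries--Denef that $\mathbb R_{\mathrm{an}}$ is o-minimal. Every basic symbol of $\mathcal R$ is definable in $\mathbb R_{\mathrm{an}}$ (the scalar multiplications $\lambda_r$ are definable from unrestricted multiplication, and the restricted analytic functions are built into the language of $\mathbb R_{\mathrm{an}}$ by construction), so $\mathcal R$ is a reduct of $\mathbb R_{\mathrm{an}}$ and is therefore o-minimal; that $\mathcal R$ expands the ordered vector space $(\mathbb R,<,+,0,1,(\lambda_r)_{r\in\mathbb R})$ is immediate from its signature.

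For semi-boundedness, the strategy is to exploit the compact support of restricted analytic functions. Each $f\in\mathcal F$ vanishes outside $[0,1]^n$, so its graph decomposes as the union of the bounded definable set $\{(\overline{x},f(\overline{x}))\,:\,\overline{x}\in[0,1]^n\}$ and the affine piece $\{(\overline{x},0)\,:\,\overline{x}\in\mathbb R^n\setminus[0,1]^n\}$, where the second component is expressible from bounded definable sets and the ordered group language alone. Combined with the fact that each $\lambda_r$ is by definition a definable endomorphism of $\mathbb R$, a routine induction on formula complexity shows that every $\mathcal R$-definable set lies in the reduct generated by bounded definable sets together with definable endomorphisms, which is precisely Edmundo's condition.

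Once both hypotheses are verified, Theorem~\ref{thm:example} applies and yields that $\mathcal R_E$ is almost o-minimal, which is the content of the corollary. The main potential obstacle is being rigorous about the semi-boundedness step; one must ensure that no composition of restricted analytic functions with scalar multiplications introduces unbounded multiplicative behavior, but this is blocked by the compact support of every $f\in\mathcal F$. With this verified, the rest of the argument is a citation chain.
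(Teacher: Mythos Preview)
Your proposal is correct and follows essentially the same route as the paper: verify that $\mathcal R$ is an o-minimal reduct of $\mathbb R_{\mathrm{an}}$, check semi-boundedness, and invoke Theorem~\ref{thm:example}. The paper dismisses semi-boundedness as ``obvious'' whereas you supply the argument via the compact support of each $f\in\mathcal F$; this extra detail is welcome, and no induction on formulas is really needed once every primitive symbol of $\mathcal R$ is seen to be definable in the reduct $(\mathbb R,0,1,+,<,(B_i)_{i\in I},(\lambda_r)_{r\in\mathbb R})$.
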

\begin{proof}
	The structure $\mathcal R:=(\mathbb R, <,0,1,+,(\lambda_r)_{r \in \mathbb R}, (f)_{f \in \mathcal F})$ is obviously a reduct of the restricted analytic field $\mathbb R_{\text{an}}:=(\mathbb R, <,0,1,+,\cdot, (f)_{f \in \mathcal F})$.
	Here, the binary operation $\cdot$ is the multiplication in $\mathbb R$.
	It is well known that $\mathbb R_{\text{an}}$ is o-minimal.
	See \cite{vdDM} for instance.
	Therefore, the reduct $\mathcal R$ is also o-minimal.
	The structure $\mathcal R$ is obviously semi-bounded.
	The corollary follows from Theorem \ref{thm:example}.
\end{proof}

\section{Quasi-periodic locally o-minimal structures}\label{sec:periodic}
We first define quasi-periodic locally o-minimal structures.
\begin{definition}
Consider a locally o-minimal expansion $\mathcal R$ of the ordered group of reals.
It is called \textit{periodic} if the set $r\mathbb Z$ is definable in it for some $r>0$.
It is called \textit{quasi-periodic} if it is a reduct of periodic o-minimal expansion of the ordered group of reals.
\end{definition}
A periodic locally o-minimal expansion $\mathcal R$ of the ordered group of reals is a simple product of $r\mathbb Z$ and $[0,r)_{\text{def}}$ by \cite[Theorem 25]{KTTT}.
See \cite{KTTT} for the definition of simple products.
Any set definable in the periodic locally o-minimal structure $\mathcal R$ is of the form 
\begin{equation}
\bigcup_{a \in Z}a+X,\label{eq:ddddd}
\end{equation}
where $Z$ is a definable subset of $(r\mathbb Z)^n$ and $X$ is a subset of $[0,r)^n$ definable in some o-minimal structure by \cite[Lemma 17]{KTTT}.
Here, $a+X$ denotes the set $\{a+x\;|\; x \in X\}$.
Therefore, sets definable in a quasi-periodic o-minimal expansion of the ordered group of reals are also of the form (\ref{eq:ddddd}).
Walsberg gave a necessary and sufficient condition that an o-minimal structure is quasi-periodic in \cite{Walsberg2} which discusses a generalization of \cite{BC}.

\begin{proposition}\label{prop:quasi-periodic1}
Let $r$ be an irrational number.
Any locally o-minimal expansion of $(\mathbb R,<,0,+,\lambda_r)$ is not quasi-periodic, where $\lambda_r$ is the automorphism given by $x \mapsto rx$.
In particular, the locally o-minimal structures constructed in Theorem \ref{thm:example} are not quasi-periodic.
\end{proposition}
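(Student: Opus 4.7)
The plan is to argue by contradiction, using the fact that an irrational multiplier combined with an arithmetic progression produces a countable dense subgroup of $\mathbb{R}$, which cannot exist as a bounded definable set in an almost o-minimal structure.

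Suppose some almost o-minimal expansion $\mathcal{R}$ of $(\mathbb{R},<,0,+,\lambda_r)$ is quasi-periodic. Unpacking the definition, there is a periodic almost o-minimal expansion $\mathcal{R}'$ of the ordered group of reals such that $\mathcal{R}$ is a reduct of $\mathcal{R}'$; so some $s\mathbb{Z}$ with $s>0$ is definable in $\mathcal{R}'$, and every function and relation of $\mathcal{R}$ (in particular $\lambda_r$) is definable in $\mathcal{R}'$. My first step is therefore to form the Minkowski sum
\[
  G \;:=\; s\mathbb{Z} + \lambda_r(s\mathbb{Z}) \;=\; s(\mathbb{Z}+r\mathbb{Z}),
\]
which is first-order definable in $\mathcal{R}'$ from $s\mathbb{Z}$, $\lambda_r$ and $+$.

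Next I invoke the classical Kronecker/Dirichlet fact that for irrational $r$ the additive subgroup $\mathbb{Z}+r\mathbb{Z}$ is dense in $\mathbb{R}$; consequently $G$ is a countable dense subset of $\mathbb{R}$. Taking the bounded trace $G\cap(0,1)$, I obtain a bounded subset of $\mathbb{R}$ definable in the almost o-minimal structure $\mathcal{R}'$ which is at once countable and dense in $(0,1)$. This contradicts almost o-minimality: such a bounded definable set would have to be a finite union of points and open intervals, but no finite set of points is dense in $(0,1)$, and including any genuine open subinterval forces uncountably many elements. This contradiction establishes the main assertion.

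The ``in particular'' clause then follows from the observation that every structure $\mathcal{R}_E$ produced by Theorem \ref{thm:example} is, by hypothesis, an expansion of $(\mathbb{R},<,+,0,1,(\lambda_s)_{s\in\mathbb{R}})$, and hence is an almost o-minimal expansion of $(\mathbb{R},<,0,+,\lambda_r)$ for every irrational $r$; the first part of the proposition applies directly. The only real obstacle is bookkeeping rather than mathematics: one must read the definition of ``quasi-periodic'' as ``reduct of a periodic \emph{almost} o-minimal expansion'' (a strict ``o-minimal'' reading would be vacuous, since an infinite discrete set $s\mathbb{Z}$ is never definable in an o-minimal expansion of the reals), and one must verify that the definability of $s\mathbb{Z}$ and of $\lambda_r$ in $\mathcal{R}'$ really does transfer to the Minkowski sum $G$—which it does, because $+$ lies in the language of the underlying ordered group.
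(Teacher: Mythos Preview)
Your argument is correct and follows essentially the same route as the paper: both produce the bounded definable set $s(\mathbb{Z}+r\mathbb{Z})$ intersected with an interval and derive a contradiction from its being countably infinite. The only cosmetic difference is that the paper exhibits an explicit bijection $s\mathbb{Z}\to X$ (the ``reduce $rt$ modulo $s$'' map) to witness infinitude, whereas you invoke Kronecker density of $\mathbb{Z}+r\mathbb{Z}$; and you work in the ambient periodic expansion $\mathcal{R}'$ while the paper first passes to the reduct $(\mathcal{R},s\mathbb{Z})$. Your observation that the definition of ``quasi-periodic'' must be read with ``almost'' inserted is also exactly right.
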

\begin{proof}
	Consider a locally o-minimal expansion $\mathcal R$ of $(\mathbb R,<,0,+,\lambda_r)$.
	Assume for contradiction that $\mathcal R$ is quasi-periodic.
	The expansion $\mathcal R_{s\mathbb Z}:=(\mathcal R,s\mathbb Z)$ is locally o-minimal for some $s>0$.
	Consider the set $X=\{x \in [0,r)\;|\; \exists z_1, z_2 \in \mathbb Z,\ x+sz_1=rsz_2 \}$.
	It is definable in the locally o-minimal structure $\mathcal R_{s\mathbb Z}$.
	
	We next consider the map $f:s\mathbb Z \to X$ given by $f(t)=rt-\sup\{u \in s\mathbb Z\;|\; u \leq rt\}$.
	It is also definable in $\mathcal R_{s\mathbb Z}$.
	We show that $f$ is bijective.
	It is obvious from the definitions that $f$ is surjective.
	Fix $t_1,t_2 \in s\mathbb Z$ such that $f(t_1)=f(t_2)$.
	Take $z_i \in \mathbb Z$ with $t_i=sz_i$ for $i=1,2$.
	We have $rt_1-rt_2 =sz$ for some $z \in \mathbb Z$.
	We get $r=\frac{z}{z_1-z_2}$ if $z_1 \neq z_2$.
	It implies that $r$ is a rational number.
	It is a contradiction.
	We have $z_1=z_2$ and $t_1=t_2$.
	We have proven that $f$ is bijective.
	
	It implies that the bounded set $X$ definable in $\mathcal R_{\mathbb Z}$ is of cardinality $\aleph_0$.
	It is a contradiction because  $\mathcal R_{\mathbb Z}$  is locally o-minimal.
	
	The `in particular' part is obvious because the structures constructed in Theorem \ref{thm:example} are locally o-minimal expansions of $(\mathbb R,<,0,+,\lambda_r)$ and any reduct of quasi-periodic locally o-minimal structure is again quasi-periodic.
\end{proof}

\begin{remark}
	Stronger results on the structure $(\mathbb R,<,0,+,\lambda_r, \mathbb Z)$  than Proposition \ref{prop:quasi-periodic1} were obtained by Hieronymi, Tychonievich and Walsberg in \cite{H, HT, Walsberg}.
	This structure is interdefinable with $\overline{\mathbb R}_{\mathbb Z}:=(\mathbb R,<,0,1,+,\cdot,\mathbb Z)$ by \cite[Theorem B]{HT} when $r$ is a non-quadratic irrational number.
	It is well known that a subset of $\mathbb R^n$ is definable in $\overline{\mathbb R}_{\mathbb Z}$ if and only if it is projective. 
	See \cite{K} for the definition of projective sets.
\end{remark}

\begin{proposition}\label{prop:quasi-periodic2}
	Let $E=\{\rho^n\;|\; n \in \mathbb Z,\ n \geq 0\}$, where $\rho$ is a real number with $\rho>1$.
	The  locally o-minimal structure $(\mathbb R, <, +, 0, E)$ is quasi-periodic if and only if $\rho$ is an integer.
\end{proposition}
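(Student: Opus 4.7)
The plan is to treat the two directions separately, relying on the structure theorem for periodic almost o-minimal structures from \cite{KTTT}.

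For the `if' direction, assuming $\rho \in \mathbb Z$, I would observe that $E \subseteq \mathbb Z$ and consider the expansion $\mathcal S = (\mathbb R, <, +, 0, \mathbb Z, E)$. Viewing $\mathcal S$ as the simple product of the discrete structure $(\mathbb Z, <, +, 0, E)$ with the o-minimal structure $([0,1), <, +, 0)$, I would invoke the simple product results of \cite{KTTT} (Theorem 25 and Lemma 17) to conclude that every definable subset of $\mathbb R^n$ in $\mathcal S$ has the form $\bigcup_{a \in Z_0}(a + X_0)$ with $Z_0 \subseteq \mathbb Z^n$ and $X_0 \subseteq [0,1)^n$ o-minimally definable; any such bounded set is a finite union of points and intervals. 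Thus $\mathcal S$ is periodic (with $r=1$) and almost o-minimal, and $(\mathbb R, <, +, 0, E)$ is exhibited as a reduct, hence quasi-periodic.

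For the `only if' direction, I would assume quasi-periodicity and let $\mathcal S$ be the periodic almost o-minimal expansion with $r\mathbb Z$ definable for some $r > 0$. Applying the structure theorem to $E$, I would write
\[ E = \bigcup_{a \in Z}(a + X), \]
with $Z \subseteq r\mathbb Z$ definable and $X \subseteq [0,r)$ definable in some o-minimal structure. The key observation is that $X$ must be finite: $E$ is closed and discrete in $\mathbb R$, so if $X$ contained an interval, every translate $a+X$ would too, contradicting discreteness. Consequently $\rho^n \bmod r \in X$ for all $n \geq 0$, so the orbit of $(\rho^n \bmod r)$ in $\mathbb R/r\mathbb Z$ is finite and hence eventually periodic: there exist $N \geq 0$ and $p \geq 1$ with $\rho^{n+p} \equiv \rho^n \pmod{r\mathbb Z}$ for all $n \geq N$. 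Setting
\[ c_n := \frac{\rho^n(\rho^p - 1)}{r} \in \mathbb Z_{>0} \quad (n \geq N), \]
the recursion $c_{n+1} = \rho c_n$ forces $\rho = c_{N+1}/c_N \in \mathbb Q$. Writing $\rho = a/b$ in lowest terms, the requirement that $c_{N+j} = (a/b)^j c_N \in \mathbb Z$ for every $j \geq 0$ forces $b^j \mid c_N$ for all $j$ (using $\gcd(a^j,b^j)=1$), so $b = 1$ and $\rho$ is an integer.

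I expect the main obstacle to be the `if' direction, where one must invoke the simple product construction from \cite{KTTT} correctly to guarantee almost o-minimality of $\mathcal S$; the `only if' direction is comparatively routine once the reduction modulo $r$ yields a finite orbit and is turned into the divisibility argument above.
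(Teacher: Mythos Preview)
Your `if' direction matches the paper's (both defer to the simple-product machinery of \cite{KTTT}). The problem is in the `only if' direction, precisely at the step ``the orbit of $(\rho^n \bmod r)$ in $\mathbb R/r\mathbb Z$ is finite and hence eventually periodic.'' A sequence with finite range need not be eventually periodic; that inference is valid only when the sequence is the orbit of a point under a self-map of the target. Here the natural candidate would be $x \mapsto \rho x$ on $\mathbb R/r\mathbb Z$, but this map is well defined on the quotient only when $\rho r\mathbb Z \subseteq r\mathbb Z$, i.e.\ when $\rho \in \mathbb Z$---which is exactly what you are trying to prove. Concretely, from $\rho^m \equiv \rho^n \pmod{r\mathbb Z}$ one gets $\rho^{m+1}-\rho^{n+1}=\rho\cdot rk$, and $\rho k$ need not be an integer. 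Without genuine periodicity you cannot conclude that $c_n=\rho^n(\rho^p-1)/r$ is an integer for \emph{every} $n\ge N$, and the divisibility argument forcing $b^j\mid c_N$ for all $j$ collapses.

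The paper does not attempt periodicity. It takes an infinite fibre $F=f^{-1}(y^*)$ of the reduction map $f\colon E\to[0,r)$ (this much agrees with your set-up) and then splits into cases. For $\rho=p/q\in\mathbb Q\setminus\mathbb Z$, any two elements $\rho^m,\rho^n\in F$ yield $(p^n-p^mq^{n-m})/q^n\in r\mathbb Z$; this forces $r\in\mathbb Q$ and then $q^n$ divides the denominator of $r$ for arbitrarily large $n$, a contradiction. For $\rho$ irrational the argument is substantially more delicate: one studies the definable set of differences $\{t_2-t_1:(t_1,t_2)\in F^2,\ t_1<t_2\}\subseteq r\mathbb Z$ and, depending on whether it is infinite or finite, invokes almost o-minimality of $(\mathcal R,r\mathbb Z)$ again (in the infinite case via the argument of Proposition~\ref{prop:quasi-periodic1}, in the finite case by producing a bounded countably infinite set definable in $\mathcal R$ itself). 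So contrary to your expectation, the `only if' direction---especially the irrational case---is where the real work lies, and it needs more than pigeonhole on residues.
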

\begin{proof}
	Let $\mathcal R=(\mathbb R, <, +, 0, E)$.
	It is locally o-minimal structure because it is a reduct of the structure given in Theorem \ref{thm:example}.
	We have only to demonstrate that $\mathcal R_{r\mathbb Z}=(\mathcal R,r\mathbb Z)$ is locally o-minimal for some $r>0$ if and only if $\rho$ is an integer.
	
	When $\rho$ is an integer, $\mathcal R_{\mathbb Z}=(\mathcal R,\mathbb Z)$ is locally o-minimal.
	The proof is similar to that of \cite[Example 20]{KTTT}.
	We omit the details.
	
	We next consider the remaining case.
	Assume for contradiction that $\mathcal R_{r\mathbb Z}$ is locally o-minimal for some $r>0$.
	Consider the map $f:E \to [0,r)$ defined by $f(x)=x - \sup\{t \in r\mathbb Z\;|\; t \leq x\}$.
	It is definable in $\mathcal R_{r\mathbb Z}$.
	Therefore, the image of $f$ is a finite set because $\mathcal R_{r\mathbb Z}$ is locally o-minimal.
	There exists $y^* \in f(E)$ whose inverse image under $f$ is an infinite set.
	Fix such $y^*$ and set $F=f^{-1}(y^*)$.
	
	We consider two separate cases.
	The first case is the case in which $\rho$ belongs to $\mathbb Q\setminus \mathbb Z$.
	There are two coprime integers $p$ and $q$ such that $\rho=p/q$ and $q>1$.
	If $\rho^m,\rho^n \in F$ and $m<n$, we have $\rho^n-\rho^m=rk$ for some $k \in \mathbb Z$.
	We have 
	\begin{equation}
	\dfrac{p^n-p^mq^{n-m}}{q^n}=rk. \label{eq:rational_eq}
	\end{equation}
	It implies that $r$ is a rational number.
	There are two coprime integers $p'$ and $q'$ such that $r=p'/q'$.
	Since $p^n-p^mq^{n-m}$ and $q^n$ are coprime, $q'$ is divisible by $q^n$.
	Since $F$ is an infinite set, $q'$ should be divisible by $q^n$ for all $n>0$.
	It is a contradiction.
	
	We next consider the case in which $\rho$ is an irrational number.
	Set $G=\{(t_1,t_2) \in F^2\;|\; t_1<t_2\}$ and consider the map $g:G \to r\mathbb Z$ given by $g(t_1,t_2)=t_2-t_1$.
	Note that $G$ is infinite because $F$ is infinite.
	We put $H=g(G)$, which is definable in $\mathcal R_{r\mathbb Z}$.
	When $H$ is an infinite set, the set $\rho H$ is an infinite subset of $\rho (r\mathbb Z)$ definable in $\mathcal R_{r\mathbb Z}$.
	We lead to a contradiction in the same manner as the proof of Proposition \ref{prop:quasi-periodic1} using the assumption that $\rho$ is irrational.
	We omit the details.
	
	We assume that $H$ is a finite set.
	There exists $z^* \in H$ such that $g^{-1}(z^*)$ is infinite because $G$ is infinite.
	Fix such an element $z^*$ and set $I=g^{-1}(z^*)$.
	Let $\pi_i:I \to E$ be the map defined by $\pi_i(t_1,t_2)=t_i$ for $i=1,2$.
	Note that $y_1-x_1=y_2-x_2$ for any $(x_1,y_1), (x_ 2,y_2) \in I$.
	It is obvious that, if $(x_1,y_1), (x_ 2,y_2) \in I$ and $x_1=x_2$ holds true, we have $y_1=y_2$, and $(x_1,y_1)=(x_2,y_2)$.
	It implies that $\pi_1$ is injective.
	It is also obvious that $\pi_2$ is injective. 
	
	Fix an arbitrary $(x,y) \in I$.
	Since $I$ is an infinite set and $\pi_1$ is injective, there is $(x',y') \in I$ with $x'>y$.
	We have $y'-x'=y-x$ and $y'/y-x'/y = 1 -x/y$.
	We get $x/y = 1 +x'/y-y'/y$.
	Remark that $x'/y$ and $y'/y$ are elements in $E$ because $E$ is a geometric progression and $y<x'<y'$.
	Consider the map $h:E^2 \to \mathbb R$ given by $h(t_1,t_2)=1+t_1-t_2$.
	The set $h(E^2)$ is definable in $\mathcal R$ and countable.
	The equality $x/y = 1 +x'/y-y'/y$ implies that $J:=\{x/y \in \mathbb R\;|\; (x,y) \in I\}$ is a subset of $h(E^2)$.
	We have $x_1/y_1 \neq x_2/y_2$ if $(x_1,y_1), (x_ 2,y_2) \in I$  and $(x_1,y_1) \neq (x_ 2,y_2)$ because $\pi_2$ is injective.
	Therefore, $J$ is a countably infinite set because $I$ is so.
	The intersection $h(E^2) \cap [0,1)$ is countably infinite because it contains the set $J$.
	It contradicts the fact that $\mathcal R$ is locally o-minimal.
\end{proof}

\appendix
\section{Expansions by fast sequences}\label{sec:appendix}
Theorem 3 of \cite{FM} says that $(\mathcal R,\phi)^{\#}$ is locally o-minimal if $\mathcal R$ is semi-bounded o-minimal structure and $\phi=(\phi_k)$ is strictly increasing sequence of positive real numbers such that $$\lim_{k \to \infty}\phi_k/\phi_{k+1}=0.$$
Here, $(\mathcal R,\phi)^{\#}$ denotes the structure $(\mathcal R,(S))$, where $S$ ranges over all subsets of Cartesian powers $\phi^n$.
The authors of \cite{FM} used the fact that the image of $\phi^n$ under affine linear functions $\mathbb R^n \to \mathbb R$ are closed and discrete without proof.
This is a gap of the original proof.
We prove it in this appendix.
The following proposition implies that the image is discrete and closed.

\begin{proposition}
	Let $\phi=(\phi_k)$ be a strictly increasing sequence of positive real numbers such that $ \lim_{k \to \infty}\phi_k/\phi_{k+1}=0$.
	For a positive integer $m$, a sequence of real numbers $(a_1, \ldots, a_m)$ of length $m$ and a positive real number $R$, we put
	$$\mathfrak I((a_1,\ldots, a_m),R)= v(\phi^m) \cap (-R,R),$$
	where $v$ is the linear map given by $v(x_1,\ldots, x_m) = \sum_{i=1}^m a_ix_i$.
	Then the set $\mathfrak I((a_1,\ldots, a_m),R)$ is a finite set 
\end{proposition}
\begin{proof}
	Set $\mathbf{a}=(a_1,\ldots, a_m)$ for simplicity.
	For any nonempty subset $J \subseteq \{1,\ldots, m\}$, we set 
	\begin{align*}
	\mathfrak  I_J(\mathbf{a},R)&=\left\{\sum_{i=1}^l a_i \phi_{k_i} \in \mathfrak I(\mathbf{a},R)\;\middle|\; \forall i, j \in J\  k_i=k_j \text{ and }\forall i \notin J, \forall j \in J\ k_i<k_j \right\}.
	\end{align*}
	We obviously have $\mathfrak I(\mathbf{a},R)=\bigcup_{\emptyset \neq J \subseteq \{1,\ldots, l\}}\mathfrak I_J(\mathbf{a},R)$.
	Therefore, we have only to demonstrate that $\mathfrak I_J(\mathbf{a},R)$ is a finite set for a fixed subset $J  \subseteq \{1,\ldots, m\}$.
	
	We prove it by induction on $m$.
	The case in which $m=1$ is obvious.
	We next consider the case in which $m>1$.
	We may assume that there exists $k$ such that $j>k$ if and only if $j \in J$ by permuting the coordinates if necessary.
	We consider two separate cases.
	
	When $\sum_{j>k}a_j=0$, we have $\mathfrak I_J(\mathbf{a},R)=\mathfrak I((a_1,\ldots, a_k),R)$.
	The set  $\mathfrak I_J(\mathbf{a},R)$ is a finite set by the induction hypothesis.
	
	The remaining case is the case in which $\sum_{j>k}a_j \neq 0$.
	Set $b=\left|\sum_{j>k}a_j\right|$.
	We take a positive real number $\varepsilon$ so that $\varepsilon <b$.
	Because $ \lim_{k \to \infty}\phi_k/\phi_{k+1}=0$, there exists $N_1>0$ such that $\sum_{j=1}^k |a_j|\phi_{n-1}<\varepsilon\phi_n$ for each $n>N_1$.
	Take $N_2>0$ such that $(b-\varepsilon)\phi_n>R$ for each $n>N_2$.
	Such an $N_2$ exists because $\lim_{n \to \infty}\phi_n=\infty$.
	Set $N=\max\{N_1,N_2\}$.
	
	We show that $\left|\sum_{i=1}^l a_i \phi_{k_i}\right| >R$ when $n>N$,  $k_i=n$ for $i>k$ and $k_i<n$ otherwise.
	In fact, we have 
	\begin{align*}
	\left|\sum_{i=1}^l a_i \phi_{k_i}\right| &\geq \left|\sum_{j>k}a_j\right|\phi_n-\sum_{j=1}^k |a_j|\phi_{k_j} \geq  b \phi_n-\sum_{j=1}^k |a_j|  \phi_{n-1}>(b-\epsilon)\phi_n>R.
	\end{align*}
	This inequality implies that,  if $\sum_{i=1}^l a_i \phi_{k_i} \in \mathfrak I_J(\mathbf{a},R)$, then $k_i \leq N$ for each $1 \leq i \leq m$.
	It means that $\mathfrak I_J(\mathbf{a},R)$ is a finite set.
\end{proof}

\section*{Acknowledgment}
The author appreciates Kota Takeuchi and Akito Tsuboi for suggesting the problem studied in this paper.
He also appreciates Chris Miller for the discussion on Appendix \ref{sec:appendix}.

\end{document}